\documentclass[10pt,leqno]{amsart}

\usepackage[dvipsnames]{xcolor}
\definecolor{webgreen}{rgb}{0,.5,0}
\definecolor{webbrown}{rgb}{.6,0,0}

\usepackage{amssymb,amsmath,amsthm}
\usepackage{graphicx}
\usepackage{indentfirst}
\usepackage{csquotes}
\usepackage{paralist}
\usepackage{fancyhdr}
\usepackage{etoolbox}
\usepackage{multicol}
\usepackage{booktabs}   
\usepackage{seqsplit}   
\usepackage{tabularx}   
\usepackage{float}
\usepackage[colorlinks=true,
linkcolor=webgreen,
filecolor=webbrown,
citecolor=webgreen,
urlcolor=black]{hyperref}

\newcommand{\seqnum}[1]{\href{https://oeis.org/#1}{\textrm{\underline{#1}}}}

\definecolor{codegreen}{rgb}{0,0.6,0}
\definecolor{codegray}{rgb}{0.5,0.5,0.5}
\definecolor{codepurple}{rgb}{0.58,0,0.82}
\definecolor{backcolour}{rgb}{0.95,0.95,0.92}

\usepackage{upquote}
\usepackage{listings}
\theoremstyle{plain}
\newtheorem{theorem}{Theorem}[section] 

\newtheorem{corollary}[theorem]{Corollary}
\newtheorem{conjecture}[theorem]{Conjecture}

\theoremstyle{definition}
\newtheorem{definition}[theorem]{Definition}

\newtheorem{algorithm}{Algorithm}[section]

\theoremstyle{remark}
\newtheorem{remark}[theorem]{Remark}

\begin{document}

\title{Structured Solutions of Prime-Base Binomial Congruences}

\author[G. A. Guedes]{G. A. Guedes}
\address{Departamento de Matem\'atica, Universidade Federal Rural de Pernambuco, Recife, PE 52171-900, Brasil}
\email{gabriel.guedes@ufrpe.br}

\author[R. N. Machado Jr.]{R. N. Machado Jr.}
\address{Departamento de Matem\'atica, Universidade Federal Rural de Pernambuco, Recife, PE 52171-900, Brasil}
\email{ricardo.machadojunior@ufrpe.br}

\date{\today}

\subjclass[2020]{Primary 11B65; Secondary 11A07, 11A63, 11A41, 11Y55}
\keywords{binomial congruence, Wolstenholme converse problem, digit sum, Wieferich prime, algorithmic number theory, integer sequence}

\begin{abstract}
In this paper, we study the congruence $\binom{qn}{n} \equiv q^n \pmod n$ for a prime base $q$. Motivated by the OEIS sequence \seqnum{A080469} and the conjectural existence of infinitely many ternary solutions of the form $n=3^t p$, we analyze the more general family $n=q^t p$, where $p\neq q$ is prime. Our main result shows that, in this family, the congruence is equivalent to two independent conditions: a congruence modulo $p$ and an inequality in the sum of the digits. This reduces the search for such solutions to factoring an explicit integer and applying a base-$q$ digit-sum filter. We use this criterion to produce new large solutions for $q\in\{2,3,5,7,11\}$. We also prove that square solutions $n=p^2$ are exactly governed by Wieferich primes in base $q$.
\end{abstract}

\maketitle

\section{Introduction}

Binomial congruences arising from Wolstenholme-type phenomena connect combinatorial identities, $p$-adic methods, and computational number theory; see, for example, Helou and Terjanian \cite{HelouTerjanian2008}, McIntosh \cite{McIntosh1995} and Me\v{s}trovi\'{c} \cite{Mestrovic2011}. These congruences have been extensively generalized in various directions within the recent literature, including the survey by Me\v{s}trovi\'{c} \cite{Mestrovic2014} and the Lucasnomial analogues studied by Ballot \cite{Ballot2020}. The primary focus of the present article is the binomial congruence for an arbitrary prime base $q$, given by
$$ \binom{qn}{n} \equiv q^n \pmod{n}. $$

Motivated by the OEIS conjecture \seqnum{A080469} regarding structured ternary solutions of the form $n=3^t p$, our approach generalizes the foundations established in our previous work \cite{GuedesMachado2023} on the binary analogue $\binom{2n}{n} \equiv 2^n \pmod{n}$. Our approach relies on reducing the global congruence into two independent conditions: a purely modular constraint and a base-$q$ digit-sum requirement. This ternary sequence belongs to a wider OEIS family attached to prime-base binomial congruences, including \seqnum{A084699} for $q=2$, \seqnum{A109760} for $q=5$, \seqnum{A109769} for $q=7$, and \seqnum{A392484} for $q=11$. This broader perspective motivates the study of
$\binom{qn}{n}\equiv q^n \pmod n$
for arbitrary prime bases $q$. We then apply the resulting criterion to obtain new large structured solutions for $q\in\{2,3,5,7,11\}.$

Furthermore, we extend the classical Wieferich connection observed in the base $2$ case to an arbitrary prime base $q$. We demonstrate that primes $p$ are Wieferich in base $q$ if and only if the square $n=p^2$ is a solution to $\binom{qn}{n} \equiv q^n \pmod{n}$. This offers a uniform characterization of these rare primes within the context of binomial congruences. A survey by Katz \cite{KatzWieferich} provides essential historical context for these Wieferich and Mirimanoff-type primes.

This paper is organized as follows. Section 2 recalls the notation and classical tools needed for the argument, including base-$q$ digit sums and standard congruences for binomial coefficients. Section 3 proves the main structural reduction for integers of the form $n=q^t p$. Section 4 applies this reduction to the conjectural ternary family arising from \seqnum{A080469}. Section 5 describes the computational search and reports the new solutions found. Section 6 proves the connection with Wieferich primes in base $q$. Finally, Section 7 discusses possible extensions and open directions.

\section{Notation and auxiliary congruences}

We collect the notation and standard congruences used throughout the paper. Let \(q\geq 2\) be a prime integer base. For a non-negative integer \(m\), we write \(s_q(m)\) for the sum of the digits of \(m\) in base \(q\). We use repeatedly the elementary identity $s_q(q^r m)=s_q(m)$
for every \(r\geq 0\).

We also recall Legendre's formula in digit-sum form. If \(q\) is prime, then
$$v_q(m!)=\frac{m-s_q(m)}{q-1}\quad\Longrightarrow\quad v_q\left(\binom{N}{M}\right)
=\frac{s_q(M)+s_q(N-M)-s_q(N)}{q-1}.$$
We shall also use Lucas' theorem in the following simple form. If \(p\neq q\) are primes, then
$$\binom{q^{t+1}p}{q^t p} \equiv \binom{q^{t+1}}{q^t} \pmod p$$
and with Fermat's theorem,
$$q^{q^t p}\equiv q^{q^t}\pmod p,$$
this provides the modular component of our main reduction.

Finally, in the study of square solutions we use the standard Babbage--Jacobsthal congruence: for primes \(p\geq 5\) and \(p\nmid q\),
$$\binom{q p^2}{p^2}\equiv\binom{q p}{p}\equiv q\pmod {p^2}.$$
These elementary tools are the only preliminary facts needed for the structural reductions below.
For an extensive historical and theoretical background on binomial coefficients modulo prime powers, Kummer's carry counting theorems, and Wolstenholme-type phenomena, the reader is referred to Davis and Webb~\cite{DavisWebb1990}, Granville~\cite{Granville1997} and Me\v{s}trovi\'{c}~\cite{Mestrovic2011}.

\section{The reduction for $n = q^t p$}

In this section, we generalize the reduction obtained in our previous work \cite{GuedesMachado2023} to an arbitrary prime base $q$. Specifically, we prove that for integers of the form $n = q^t p$, the binomial congruence $\binom{qn}{n} \equiv q^n \pmod{n}$ reduces to independent arithmetic conditions that are well-suited for a computational search.

\begin{theorem}[Prime-base reduction for $n=q^t p$]
\label{thm:base-q-qt-p}
Let $q$ be a fixed prime. Let $t\geq 1$ and let $p\neq q$ be a prime. Set
$$n=q^t p \quad\text{and}\quad A_t^{(q)}:=\binom{q^{t+1}}{q^t}-q^{q^t}.$$
Then the binomial congruence
\begin{equation}
\label{eq:q-base-congruence}
    \binom{qn}{n}\equiv q^n \pmod n
\end{equation}
holds if and only if the following two conditions hold simultaneously:

\begin{enumerate}
    \item[(i)] $p\mid A_t^{(q)}$, i.e.
    $\binom{q^{t+1}}{q^t}\equiv q^{q^t}\pmod p;$
    \item[(ii)] 
    $s_q((q-1)p)\geq (q-1)t,$ where $s_q(m)$ denotes the sum of the digits of $m$ in base $q$.
\end{enumerate}
\end{theorem}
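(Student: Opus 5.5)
Since $\gcd(q^t,p)=1$, the Chinese remainder theorem lets us treat the congruence \eqref{eq:q-base-congruence} separately modulo $p$ and modulo $q^t$. I will show that the congruence modulo $p$ is exactly (i) and the congruence modulo $q^t$ is exactly (ii); the theorem then follows.

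\emph{The modulus $p$.} By the form of Lucas' theorem recalled in Section 2, $\binom{qn}{n}=\binom{q^{t+1}p}{q^tp}\equiv\binom{q^{t+1}}{q^t}\pmod p$. By Fermat's theorem, $q^n=q^{q^tp}\equiv q^{q^t}\pmod p$. Hence $\binom{qn}{n}\equiv q^n\pmod p$ is equivalent to $p\mid A_t^{(q)}$, which is (i).

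\emph{The modulus $q^t$.} Since $t\ge1$ and $n\ge q^t\ge t$, we have $q^t\mid q^n$. So the congruence modulo $q^t$ reads $q^t\mid\binom{qn}{n}$, that is, $v_q\binom{qn}{n}\ge t$. Apply Legendre's formula with $N=qn$ and $M=n$, so that $N-M=(q-1)n=q^t(q-1)p$. Using $s_q(q^rm)=s_q(m)$ gives $s_q(M)=s_q(p)$, $s_q(N-M)=s_q((q-1)p)$ and $s_q(N)=s_q(p)$. Therefore
\[
v_q\binom{qn}{n}=\frac{s_q((q-1)p)}{q-1},
\]
and the condition $v_q\ge t$ is exactly $s_q((q-1)p)\ge(q-1)t$, which is (ii).

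Combining the two moduli proves the equivalence. The only steps needing care are the trivial divisibility $q^t\mid q^n$ and the observation that the digit sums of $M$ and $N$ cancel. The Lucas reduction is taken as given from Section 2; it holds because multiplying by $p$ appends a zero base-$p$ digit in both the top and bottom entries. I therefore expect no real obstacle.
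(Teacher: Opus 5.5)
Your proposal is correct and follows the paper's proof essentially step for step. Both arguments split the congruence modulo $n$ by CRT into the moduli $p$ and $q^t$. For the modulus $p$, both use Lucas and Fermat to obtain (i). For the modulus $q^t$, both use $q^t\mid q^n$ and Legendre's digit-sum formula, where the terms $s_q(n)$ and $s_q(qn)$ cancel, to obtain (ii).
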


\begin{proof}
Since $\gcd(q^t,p)=1$, the Chinese Remainder Theorem reduces the Congruence \eqref{eq:q-base-congruence} modulo $n=q^t p$ to its reductions modulo $p$ and modulo $q^t$.

Modulo $p$, Lucas' theorem and Fermat's little theorem give
$$
\binom{qn}{n}
=
\binom{q^{t+1}p}{q^t p}
\equiv
\binom{q^{t+1}}{q^t}
\pmod p
$$
and
$$
q^n=q^{q^t p}=(q^p)^{q^t}\equiv q^{q^t}\pmod p,
$$
since $p\neq q$. Hence the condition {\em(i)} is true.

It remains to treat the modulus $q^t$. Since $n=q^t p\ge t$, we have $q^t\mid q^n$. Thus the congruence modulo $q^t$ is equivalent to
$$
q^t\mid \binom{qn}{n},
\qquad\text{that is,}\qquad
v_q\left(\binom{qn}{n}\right)\ge t.
$$
Using Legendre's formula, we obtain
$$
 v_q\left(\binom{qn}{n}\right) = \frac{s_q((q-1)n)+s_q(n)-s_q(qn)}{q-1} = \frac{s_q((q-1)n)}{q-1}.
$$
Here $s_q(qn)=s_q(n)$, because multiplication by $q$ appends a zero in base $q$. Since $n=q^t p$, multiplication by $q^t$ also appends $t$ zeros, and so
$$
s_q((q-1)n)=s_q((q-1)q^t p)=s_q((q-1)p).
$$
Therefore
$$
q^t\mid \binom{qn}{n}
\iff
s_q((q-1)p)\ge (q-1)t,
$$
which is condition (ii). Combining the two congruences completes the proof.
\end{proof}

\begin{corollary} \label{ternary_version}
Let $t\ge 1$ and let $p\neq 3$ be prime. Set $n=3^t p$.
Then $\binom{3n}{n} \equiv 3^n \pmod{n}$ holds for $n$ if and only if both conditions hold:
\begin{enumerate}
\item [(i)] $p\mid A_t^{(3)}$, i.e.
$\binom{3^{t+1}}{3^t}\equiv 3^{3^t}\pmod p$;
\item [(ii)] $s_{3}(2p)\ge 2t$.
\end{enumerate}
\end{corollary}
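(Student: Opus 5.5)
This corollary is the specialization $q=3$ of Theorem~\ref{thm:base-q-qt-p}, so the plan is simply to instantiate that theorem and check that each hypothesis and each condition translates correctly. Since $3$ is prime, we fix $q=3$. For $t\ge 1$ and a prime $p\neq 3$, the integer $n=3^t p$ is exactly of the form $q^t p$ treated there. Hence the congruence $\binom{3n}{n}\equiv 3^n \pmod n$ holds if and only if conditions (i) and (ii) of Theorem~\ref{thm:base-q-qt-p} hold with $q=3$.

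Next we rewrite the two conditions in ternary form. Condition (i) becomes $p\mid A_t^{(3)}=\binom{3^{t+1}}{3^t}-3^{3^t}$, which is the stated congruence modulo $p$. In condition (ii), we substitute $q-1=2$ on both sides of $s_q((q-1)p)\ge (q-1)t$, which gives $s_3(2p)\ge 2t$.

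There is no real obstacle here. The only point worth checking is that the hypotheses of the theorem ($q$ prime, $t\ge1$, $p\neq q$ prime) are exactly those of the corollary, and they are. If one wanted a self-contained argument instead, one would repeat the proof of Theorem~\ref{thm:base-q-qt-p} at $q=3$:
\begin{itemize}
\item split the congruence via the Chinese Remainder Theorem into its reductions modulo $p$ and modulo $3^t$;
\item modulo $p$, use Lucas' theorem to get $\binom{3^{t+1}p}{3^tp}\equiv\binom{3^{t+1}}{3^t}$, and Fermat's theorem to get $3^{3^tp}\equiv 3^{3^t}$;
\item modulo $3^t$, note that $3^t\mid 3^n$, so the condition becomes $v_3\binom{3n}{n}\ge t$, and by Legendre's formula $v_3\binom{3n}{n}=s_3(2n)/2=s_3(2p)/2$.
\end{itemize}
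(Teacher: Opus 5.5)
Your proposal is correct and matches the paper: the corollary is the $q=3$ instance of Theorem~\ref{thm:base-q-qt-p}, where $q-1=2$ turns condition (ii) into $s_3(2p)\ge 2t$. Your optional self-contained sketch (the Chinese Remainder split, Lucas and Fermat modulo $p$, and Legendre's digit-sum formula modulo $3^t$) follows the paper's proof of that theorem step by step.
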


\begin{remark}
Corollary \ref{ternary_version} is the direct analogue of the binary structural theorem by Guedes and Machado
\cite[Theorem 2.1]{GuedesMachado2023}. Here the modular component is encoded by $A_t^{(3)}$,
while the $3$-adic component becomes a ternary digital condition.
\end{remark}

\section{OEIS sequences and the ternary conjecture} 

The sequence \seqnum{A080469} in the On-Line Encyclopedia of Integer Sequences (OEIS) catalogues integers $n$ such that $\binom{3n}{n} \equiv 3^n \pmod{n}$. In 2015, M. F. Hasler noted that a significant subsequence of these terms, specifically $a(n)$ for $n \in \{2, 6, 7, 8, 9, 10\}$, follows the form $3^k p$ where $p$ is a prime. This observation led to the conjecture that there are infinitely many solutions of this structural form.

\begin{conjecture}
The equation $\binom{3n}{n} \equiv 3^n \pmod{n}$ has infinitely many solutions of the form
$n=3^k\cdot p$, where $k$ is a positive integer and $p$ is a prime number.
More generally, the equation $\binom{qn}{n} \equiv q^n \pmod{n}$ has infinitely many solutions
of the form $n=q^k\cdot p$, where $p\neq q$ are prime numbers and $k$ is a
positive integer.
\end{conjecture}

Theorem \ref{thm:base-q-qt-p} provides a natural framework for this conjecture. Indeed, for solutions of the form $n=q^t p$, the global congruence is governed by two independent conditions:
\begin{multicols}{2}
\begin{enumerate}
    \item[{\em(i)}]  $p \mid \left( \binom{q^{t+1}}{q^t} - q^{q^t} \right)$;
    \item[{\em(ii)}] $s_q((q-1)p) \geq (q-1)t$.
\end{enumerate}
\end{multicols}
Thus the conjectural infinitude of such solutions can be viewed as the expectation that, for infinitely many values of $t$, at least one prime divisor of $A_t^{(q)}$ also passes the corresponding base-$q$ digit-sum filter.

\subsection{Heuristic perspective}

We present a heuristic argument supporting the expectation that infinitely many solutions of the form $n=q^t p$ should occur. By Theorem \ref{thm:base-q-qt-p}, for each fixed $t$, the modular condition naturally selects prime candidates among the prime divisors of
$A_t^{(q)}$.
A solution is then obtained whenever one of these primes also satisfies the digital condition
$s_q((q-1)p)\ge (q-1)t$.

The growth of $A_t^{(q)}$ suggests that this mechanism should continue to produce large sets of prime candidates as $t$ increases. For large prime factors $p$, it is natural to expect that the base-$q$ expansion of $(q-1)p$ will usually have a digit sum comparable to that of a typical integer of similar size. From this point of view, the inequality $s_q((q-1)p)\ge (q-1)t$ should be seen not as an exceptional restriction, but as a filter that many sufficiently large candidates may reasonably pass.

In summary, the two conditions in Theorem \ref{thm:base-q-qt-p} complement each other: the modular condition yields structured prime candidates, while the base-$q$ digit-sum constraint filters out those with overly sparse $q$-adic expansions. The numerical examples for $q\in\{2,3,5,7,11\}$ presented in Section \ref{sec:comp_metho} align with this reduction, suggesting that solutions of the form $n=q^t p$ occur regularly across prime bases rather than by mere coincidence.

\section{Computations and new terms}\label{sec:comp_metho}

To compute new solutions for the binomial congruence $\binom{qn}{n} \equiv q^n \pmod{n}$, we use a search routine based on candidate generation and integer factorization. This algorithm systematically identifies structured solutions of the form $n = q^t p$ by checking the two conditions established in Theorem \ref{thm:base-q-qt-p}.

Our computational method consists of two main steps. First, we compute the constant $A_t^{(q)} = \binom{q^{t+1}}{q^t} - q^{q^t}$ using Algorithm \ref{algo_gera_binom_} for fixed primes $q$ and exponents $t$. Because these constants grow double-exponentially, we extract their prime factors using the \textit{Generic Integer Factorization} suite Alpern \cite{alpern2026}. This tool employs methods such as the Lenstra Elliptic Curve Method (ECM) and the Self-Initializing Quadratic Sieve (SIQS) to handle the large bit-lengths of $A_t^{(q)}$.

In the second stage, each prime factor $p$ obtained from the factorization is tested against the base-$q$ digit-sum condition
$$s_q((q-1)p) \ge (q-1)t$$
using Algorithm \ref{algo_condicao_2}. The primes that pass this test yield solutions of the form $n=q^t p$.

\subsection{Algorithmic implementation}

We implemented the following algorithms in SageMath version 9.5 to identify the new solutions reported in this work.

\begin{algorithm}\label{algo_gera_binom_}Generation of the modular constant
\begin{lstlisting}[language=Python]
def generate_modular_constant(base, t):
    # Calculates the constant Atq = binom(base^(t+1),base^t)-base^(base^t)
    # The prime factors of this constant are candidates satisfying the 
    # modular condition (i) of Theorem 1. Implementation in SageMath.
    
    # Calculate Atq using SageMath's arbitrary-precision binomial function
    # This value is later factored to find suitable primes p
    Atq = (binomial(base**(t+1), base**(t)) - base**(base**t))
    
    return Atq
\end{lstlisting}
\end{algorithm}

\begin{algorithm}\label{algo_condicao_2} Verification of the digit-sum condition
\begin{lstlisting}[language=Python]
def verify_condition_sum_digits(q, t, prime_list):
    # Iterates through prime factors p to identify those satisfying 
    # condition (ii) of Theorem 1: sq((q-1)p) >= (q-1)t.
    
    for p in prime_list:
        # Compute the sum of digits of (q-1)*p in base q
        # SageMath's .digits() method returns the representation in base q
        sdpb = sum(((q-1)*p).digits(base=q))  
        
        # Verify if the q-adic digital sum meets the filter threshold
        if (q-1)*t <= sdpb:
            # If both conditions are satisfied, n = q^t * p is a valid solution
            print('%d^%d x %d = %d' % (q, t, p, (q**t)*p))
\end{lstlisting}
\end{algorithm}

\subsection{Computational results}

Using the method described above, we identified several new solutions for the congruence $\binom{qn}{n} \equiv q^n \pmod{n}$ in the form $n = q^t p$. 

Due to the double-exponential growth of the modular constant $A_t^{(q)}$, complete prime factorization was only achievable up to certain threshold limits. Utilizing the Alpertron suite, we successfully obtained the complete factorization for the constants $A_8^{(2)}$, $A_9^{(2)}$; $A_1^{(3)}$ through $A_5^{(3)}$; $A_1^{(5)}$ through $A_3^{(5)}$; $A_1^{(7)}, A_2^{(7)}$;  $A_1^{(11)}$, $A_3^{(11)}$ and $A_4^{(11)}$. For higher exponents, the factorization process was partial. We extracted a subset of prime factors for $A_6^{(3)}, A_7^{(3)}, A_4^{(5)}, A_5^{(5)}, A_6^{(5)}$, $A_3^{(7)}$, and $A_2^{(11)}$ before halting the execution due to computational constraints. 

\subsubsection{Primality verification and certification of solutions}

We summarize in Table \ref{tab:solutions} the values of $q, t$, and $p$ for the newly discovered solutions $n = q^t p$ to the congruence $\binom{qn}{n} \equiv q^n \pmod{n}$. Due to typographical constraints, the full decimal expansions of the exceptionally large prime factors have been truncated. The complete data for these primes is publicly available in our supplementary repository \cite{MachadoRepo}.
All prime factors reported in Table \ref{tab:solutions}, with the exception of the 21,288-digit probable prime, have been formally certified. We generated Elliptic Curve Primality Proving (ECPP) certificates for these integers using the Primo software \cite{MartinPrimo} (version 4.3.3), establishing primality beyond probabilistic tests like Baillie-PSW. The corresponding certificate files are available in our supplementary repository. Generating an ECPP certificate for a 21,288-digit prime is a massive computational undertaking. Any future certification of this number, whether achieved by the authors or the broader community, will be incorporated into the project's repository.

\begin{table}[H]
\centering
\caption{New solutions found for the congruence $\binom{qn}{n} \equiv q^n \pmod{n}$ with $n = q^t p$.}
\label{tab:solutions}
\small
\begin{tabularx}{\textwidth}{l c X}
\toprule
\textbf{Base ($q$)} & \textbf{Exp. ($t$)} & \textbf{Prime Factor ($p$)} \\
\midrule
2 & 8 & 4668330087227038797331533869889967 \\
2 & 8 & 271306135934685499552087746732362630141195875293262717461431 \\
2 & 9 & $425624709\dots 486709303$ ($278$ digits) \\
\midrule
3 & 3 & 107473761120101287 \\
3 & 4 & \seqsplit{315659155112048243028962773892209111491033137977545619169062109} \\
3 & 5 & 515025243037 \\
3 & 5 & 12613038334680779163587 \\
3 & 5 & 439736513990836620306262926703 \\
3 & 5 & \seqsplit{5154068650396768469936193910848234649146117150229577408388340963310157399993408377777858579508512769864973988305404218204527729811} \\
3 & 7 & 55088152723 \\
\midrule
5 & 2 & 9837733830594324400829 \\
5 & 3 & 3634667136029 \\
5 & 3 & \seqsplit{522771425508853711297596476396775636799424934500055536164860162531443474822165842761095895895952821575457523} \\
5 & 4 & 15207220980187081 \\
5 & 5 & 12428536940833320277 \\
5 & 6 & 192304957643 \\
\midrule
7 & 2 & 3350427851562891657143 \\
7 & 2 & 753394760358680657053588449200400901 \\
7 & 3 & 10539114841 \\
7 & 3 & 585698773549 \\
\midrule
11 & 2 & 105745859096254746901536512503843 \\
11 & 3 & $111789408\dots332286279$ ($1\,935$ digits) \\
11 & 4 & $426696441\dots074843387$ ($21\,288$ digits) \\
\bottomrule
\end{tabularx}
\end{table}

\subsection{OEIS sequences and new certified terms}

The solutions reported in Table \ref{tab:solutions} have a direct interpretation in terms of several integer sequences in the OEIS. For a fixed prime base $q$, let
\[
\mathcal S_q=\left\{ n\in \mathbb N:\ n \text{ is composite and }
\binom{qn}{n}\equiv q^n \pmod n \right\}.
\]
The cases considered here correspond to the following OEIS entries:
$$
\begin{array}{c|c|c}
q & \text{OEIS sequence} & \text{defining congruence}\\
\hline
\rule{0pt}{15pt} 2 & \seqnum{A084699} & \binom{2n}{n}\equiv 2^n \pmod n\\[0.05in]
3 & \seqnum{A080469} & \binom{3n}{n}\equiv 3^n \pmod n\\[0.05in]
5 & \seqnum{A109760} & \binom{5n}{n}\equiv 5^n \pmod n\\[0.05in]
7 & \seqnum{A109769} & \binom{7n}{n}\equiv 7^n \pmod n\\[0.06in]
11 & \seqnum{A392484} & \binom{11n}{n}\equiv 11^n \pmod n.
\end{array}
$$

It is important to stress that Table \ref{tab:solutions} is not intended to provide consecutive new terms for these sequences. The OEIS lists are ordered by the magnitude of $n$, whereas our search is restricted exclusively to the highly structured subfamily $n=q^t p$ (where $p\ne q$ is prime). Thus, the certified entries in Table~\ref{tab:solutions} are rigorously established elements of the corresponding OEIS sequences. The entry involving the 21,288-digit probable prime should be regarded as a conditional structured candidate pending a formal primality certificate.

For $q=3$, these results are especially relevant. They provide substantial evidence for the conjectural phenomenon, originally observed by M. F. Hasler in \seqnum{A080469}, predicting that infinitely many terms should occur in the form $3^k p$. The new ternary solutions feature prime factors substantially larger than those visible from direct searches. Furthermore, the analogous examples for $q \in \{2, 5, 7, 11\}$ suggest that this mechanism extends beyond the ternary case, reflecting a broader prime-base phenomenon governed by the interaction between Lucas-type modular reduction and base-$q$ digit sums.

In practice, the certification of these terms highlights the utility of our reduction. By converting a computationally intractable global congruence into independent arithmetic conditions, we obtain a rigorous proof for solutions that lie far beyond the reach of direct computation.

\section{Square solutions and Wieferich primes} 

In our previous work, which focused on the binary congruence $\binom{2n}{n}\equiv 2^n \pmod n$, we observed that square solutions of the form $n=p^2$ are naturally connected with classical Wieferich primes. More precisely, the case $q=2$ leads to the usual Wieferich condition $2^{p-1}\equiv 1 \pmod {p^2}$.

In this section, we show that this connection is not restricted to the binary setting. The argument extends to an arbitrary prime base $q$, replacing classical Wieferich primes with Wieferich primes in base $q$ (see Dorais and Klyve \cite{Dorais2011} for extensive computational searches regarding these base-$q$ analogues).

\begin{definition}[Wieferich in base $q$]\label{def:wieferich}
Let $q\ge 2$ be an integer and $p\nmid q$ a prime.
We say $p$ is \emph{Wieferich in base $q$} if
$$q^{p-1}\equiv 1\pmod{p^2}.$$
\end{definition}

\begin{theorem}[Wieferich base $q$ $\Longleftrightarrow$ square solution]\label{thm:wieferich-general}
Let $q$ and $p$ be primes with $p\neq q$ and $p\ge 5$.
Then
$$
\binom{q p^2}{p^2}\equiv q^{p^2}\pmod{p^2}
\quad\Longleftrightarrow\quad
q^{p-1}\equiv 1\pmod{p^2}.
$$
Equivalently, $p$ is Wieferich in base $q$ if and only if $n=p^2$ is a solution of
$\binom{qn}{n}\equiv q^n\pmod n$ (with $n=p^2$).
\end{theorem}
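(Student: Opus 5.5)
The plan is to compute both sides of the congruence modulo $p^2$ separately and then compare them. The left side will come from the Babbage--Jacobsthal congruence recalled in Section 2. The right side will be reduced using Euler's theorem, and equating the two will give a statement about $q^{p-1}$ modulo $p^2$.

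For the left side, since $p\ge 5$ and $p\nmid q$ (because $p\neq q$ are distinct primes), the Babbage--Jacobsthal congruence gives directly
$$\binom{qp^2}{p^2}\equiv \binom{qp}{p}\equiv q \pmod{p^2}.$$
If one wanted to justify this from scratch, the standard route is the Jacobsthal chain $\binom{ap^{k}}{bp^{k}}\equiv\binom{ap^{k-1}}{bp^{k-1}}$ modulo $p^{2k}$ (indeed modulo $p^{3k}$ for $p\ge 5$), followed by Babbage's $\binom{ap}{p}\equiv a\pmod{p^2}$ with $a=q$. Since the paper lists this congruence as a tool, I will simply invoke it. The left side is therefore $q$ modulo $p^2$.

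For the right side, write $p^2=(p-1)p+p$. Because $\gcd(q,p)=1$ and $\varphi(p^2)=p(p-1)$, Euler's theorem gives $q^{p(p-1)}\equiv 1\pmod{p^2}$, and hence
$$q^{p^2}\equiv q^{p}\pmod{p^2}.$$

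The congruence in the statement is then equivalent to $q\equiv q^{p}\pmod{p^2}$. Since $q$ is invertible modulo $p^2$, this is equivalent to $q^{p-1}\equiv 1\pmod{p^2}$, which is exactly the Wieferich condition of Definition \ref{def:wieferich}. Every step is an equivalence, so both directions follow at once. The reformulation for $n=p^2$ is just this statement read with $qn=qp^2$ and modulus $n=p^2$.

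The only step needing care is the Babbage--Jacobsthal input. It is where the hypothesis $p\ge 5$ is used, and it is what ensures that the left side is exactly $q$ modulo $p^2$ rather than merely modulo $p$. Since it has been stated as a preliminary, I expect no real obstacle beyond checking that its hypotheses $p\ge5$ and $p\nmid q$ are met.
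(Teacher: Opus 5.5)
Your proof is correct and follows the paper's route: the Babbage--Jacobsthal congruence makes the left side $\equiv q \pmod{p^2}$, and the rest is elementary. The only difference is on the right side, and it is minor. You use Euler's theorem to get $q^{p^2}\equiv q^{p}\pmod{p^2}$ directly, which yields both directions at once. The paper instead passes to $q^{p^2-1}\equiv 1$ and argues that the order of $q$ divides $\gcd(p(p-1),p^2-1)=p-1$, then handles the converse separately.
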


\begin{proof}

A Babbage/Jacobsthal-type lifting for binomial coefficients modulo prime powers
(see, e.g., Granville \cite{Granville1997}) implies, for $p\ge 5$, that
$$
\binom{q p^2}{p^2}\equiv \binom{q p}{p}\pmod{p^2},
$$
and Babbage's congruence gives
$$
\binom{q p}{p}\equiv q\pmod{p^2}.
$$
Hence the left side is congruent to $q$ modulo $p^2$.
Therefore $\binom{q p^2}{p^2}\equiv q^{p^2}\pmod{p^2}$ is equivalent to $q\equiv q^{p^2}\pmod{p^2}$, i.e.
$q^{p^2-1}\equiv 1\pmod{p^2}$.
Since $q$ is a unit modulo $p^2$, its order divides $\varphi(p^2)=p(p-1)$.
If $q^{p^2-1}\equiv 1$, then the order of $q$  also divides $p^2-1=(p-1)(p+1)$, hence divides
$\gcd(p(p-1),(p-1)(p+1))=p-1$. Thus $q^{p-1}\equiv 1\pmod{p^2}$.
Conversely, $q^{p-1}\equiv 1 \pmod{p^2}$ implies $q^{p^2-1}=(q^{p-1})^{p+1}\equiv 1 \pmod{p^2}$.
\end{proof}

\begin{remark}
For $q=3$, Theorem \ref{thm:wieferich-general} explains the appearance of $n=11^2=121$
among the smallest composite solutions to $\binom{3n}{n} \equiv 3^n \pmod{n}$ (since $11$ is Wieferich in base $3$).
For historical background on Wieferich/Mirimanoff-type primes see Katz \cite{KatzWieferich};
for computational aspects of generalized Wieferich congruences, see also
Keller and Richstein~\cite{KellerRichstein2005}.
\end{remark}

\section{Acknowledgments}

We would like to thank the Department of Mathematics of the Federal Rural University of Pernambuco (UFRPE), for the excellent work environment, with the appreciation of the broad spectrum that academic life can have.

\end{document}